\theoremstyle{plain}
\newtheorem{theorem}{\bf Theorem}[section]
\newtheorem{lemma}[theorem]{\bf Lemma}
\theoremstyle{definition}
\newcommand{\N}{\mathbb N}
\newcommand{\Z}{\mathbb Z}
 \DeclareMathOperator{\ord}{ord}
 \DeclareMathOperator{\supp}{supp}
\renewcommand{\t}{\, | \,}
\numberwithin{equation}{section}
\begin{document}

\title[The structure of sequences]{The structure of sequences with zero-sum subsequences of the same length on finite abelian groups of rank two}

\author[W. Hui]{Wanzhen Hui}
\address{Sichuan Normal University, School of Mathematical Sciences, Chengdu, Sichuan, 610066, China} \email{huiwanzhen@163.com (W. Hui)}

\author[X. Li]{Xue Li*}
\address{Tianjin University of Commerce, College of Science, Tianjin, 300134, China} \email{lixue931006@163.com (X. Li)}


\keywords{Finite abelian group; Zero-sum subsequence; Inverse problem}

\subjclass[2020]{11B75, 11P70}

\thanks{*Corresponding author: Xue Li, Email: lixue931006@163.com}

\begin{abstract}
Let $G$ be an additive finite abelian group, and let $\mathrm{disc}(G)$
denote the smallest positive integer $t$ with the property that
every sequence $S$ over $G$ with length $|S|\geq t $ contains two nonempty
zero-sum subsequences of distinct lengths.
In recent years, Gao et al. established the exact value of $\mathrm{disc}(G)$
for all finite abelian groups of rank $2$ and resolved the corresponding
inverse problem for the group $C_n \oplus C_n$. In this paper,
we characterize the structure of sequences $S$ over $G = C_n \oplus C_{nm}$ (where $m\geq 2$)
when $|S| = \mathrm{disc}(G)- 1$ and all nonempty zero-sum subsequences of $S$ have the same length.
 \end{abstract}

\maketitle

\section{Introduction}
\bigskip

Throughout this paper, let $G$ be an additive finite abelian group.
We denote by $C_{n}$ the cyclic group of $n$ elements, and denote by
$C_{n}^{r}$ the direct sum of $r$ copies of $C_{n}$.

Let $p$ be a prime number. An old conjecture posed by Graham states that if $S$
is a sequence of length $|S|=p$ over $C_{p}$ such that all nonempty
zero-sum subsequences of $S$ have the same length, then $S$ takes at
most two distinct terms. In 1976, P. Erd\H{o}s and E. Szemer\'{e}di
\cite{Er-Sz76} showed that Graham's conjecture holds for sufficiently
large $p$. In 2010, W. Gao, Y. Hamidoune and G. Wang \cite{Ga-Ha-Wa10}
proved Graham's conjecture in full generality. Furthermore, they extended this
result to all positive integers.
Subsequently,  D. Grynkiewicz \cite{Gr11} provided an alternative proof.
In 2012, B. Girard \cite{Gir12} posed the problem of determining
the smallest integer $t$, which is denoted by $\mbox{disc}(G)$,
such that every sequence $S$ over $G$ of length $|S|\geq t$ has two nonempty zero-sum subsequences
of distinct lengths. Since then, $\mbox{disc}(G)$ has been systematically studied
by numerous authors,
and its exact value has been determined for several classes of finite abelian groups,
including the groups of rank at most two, the groups of very large exponent
compared to $|G|/\exp(G)$, elementary $2$-groups, additional special abelian
$p$-groups and certain groups of rank three (see \cite{GHLYZ, Ga-Li-Zh-Zh16, Ga-Zh-Zh15, L-Y24}).

On the other hand, Gao et al. \cite{Ga-Li-Zh-Zh16} considered the inverse problem
associated with $\mbox{disc}(G)$. In particular, they investigate
the set of all positive integers $t$, denoted by $\mathcal{L}_{1}(G)$,
such that there is a sequence $S$ over $G$ with length
$\mbox{disc}(G)-1$ and all nonempty zero-sum subsequences
of $S$ have the same length $t$.
They conjectured that $|\mathcal{L}_{1}(G)|=1$
for any finite abelian group.
In 2020, Gao et al. \cite{GHLYZ}
proved that $\mathcal{L}_{1}(G)=\{\mathrm{exp}(G)\}$ for the
abelian groups of rank at most two, $C_{mp^{n}}\oplus H$ with $m$ being a positive integer
and $H$ being a $p$-group with $\mathsf D(H)\leq p^{n}$, the finite abelian groups of
very large exponent compared to $|G|/\exp(G)$.
Moreover, they disproved this conjecture by demonstrating that
$|\mathcal{L}_{1}(G)|\geq 2$ for certain abelian $p$-groups.
To gain a deeper understanding of the sequence structures on these groups,
they have respectively characterized the structure of sequences of length
$\mbox{disc}(G)-1$ where all nonempty zero-sum subsequences have the same length
on the cyclic group $C_n$ and the group $C_{n}\oplus C_n$.
Recently, X. Li and Q. Yin \cite{L-Y24} have successfully
extended the scope of application of the conjecture to some groups of rank $3$,
including the group $C_2\oplus C_{2m}\oplus C_{2mn}$ and $C_3\oplus C_{6m}\oplus C_{6m}$,
where $m$ and $n$ are positive integers with $m|n$.
Currently, research on such inverse zero-sum problems remains insufficient,
and existing results are largely confined to specific group structures.
To overcome this constraint, it is necessary to employ novel methodologies to
characterize the structure of extremal sequences over more finite abelian groups
in which all nonempty zero-sum subsequences have the same length.

In this paper, we consider more general finite abelian groups $G$ of rank $2$,
and we mainly characterize the structure of the sequence $S$
when $|S| = \mathrm{disc}(G)- 1$ and all nonempty zero-sum subsequences of
$S$ have the same length.

Our main result is as follows.

\begin{theorem}\label{t1}
Let
$G=C_{n}\oplus C_{nm}$ with $n, m\geq2$ be integers. Let $S$ be a sequence
over $G$ with length $\mathrm{disc}(G)-1$ and all nonempty zero-sum
subsequences of $S$ have the same length.
Then there exists a generating set $\{g_1, g_2\}$ of $G$ with $\ord(g_2)=nm$ such that $S$ has one of the following forms.

\begin{itemize}
\item[(1)] $S=g_2^{2nm-1}\prod_{i=1}^{n-1}(x_ig_2+g_1)$, where $\ord(g_1)=n$ and $x_1,\ldots, x_{n-1}\in[0,nm-1]$.

\item[(2)] $S=g_1^{n-2}g_2^{2nm-1}(-(n-1)g_1+g_2)$.

\item[(3)] $S=g_1^{n-1}g_2^{2nm-1}$.

\item[(4)] $S=g_1^{2nm-1}\prod_{i=1}^{n-1}(-y_ig_1+g_2)$, where $\ord(g_1)=nm$, and $\Sigma_{i=1}^{n-1}y_i\in[0,n-1]$.

\item[(5)] $S=g_1^{sn+tn-1}g_2^{2nm+n(1-s)-tn-1}$, where $\ord(g_1)=nm$, $s\in[1,m]$ and $t\in[0,m]$.
\end{itemize}

\end{theorem}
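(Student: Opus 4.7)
The plan is to combine two results cited above---that $\disc(G) = 2nm+n-1$ and that $\mathcal{L}_1(G) = \{\exp(G)\}$ for rank-two groups---to reduce the problem to classifying sequences $S$ over $G$ of length $2nm + n - 2$ whose every nonempty zero-sum subsequence has length exactly $nm$. From these two facts alone, one immediately obtains two universal multiplicity bounds: every $g\in\supp(S)$ satisfies either $v_g(S) < \ord(g)$, or $\ord(g)=nm$ and $v_g(S) \le 2nm-1$; otherwise $g^{\ord(g)}$ or $g^{2nm}$ would be a nonempty zero-sum subsequence of the wrong length.

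The heart of the argument is to produce a heavy element. I would first invoke the identity $s(G) = \mathsf D(G)+\exp(G)-1 = 2nm+n-2 = |S|$, which gives at least one zero-sum subsequence of $S$ of length $nm$; then, by analysing how these length-$nm$ zero-sums overlap and peeling them off inductively against the Davenport bound $\mathsf D(G) = n+nm-1$, I aim to show that some $g_2\in\supp(S)$ of order $nm$ satisfies $v_{g_2}(S) \ge nm$. Because $g_2$ has order equal to $\exp(G)$ it extends to a basis of $G$, so there is a well-defined quotient $\pi: G \to G/\langle g_2\rangle \cong C_n$. Writing $S = g_2^{v}\cdot T$ with $v = v_{g_2}(S)$ and $v_{g_2}(T) = 0$, the residual part $T$ has length $|S|-v \le nm+n-2$; in the extremal case $v = 2nm-1$, $T$ has only $n-1$ terms and becomes attackable by hand.

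The structural step is the quotient reduction. Every length-$nm$ zero-sum subsequence of $S$ decomposes as a subsequence $T_0 \mid T$ whose image $\pi(T_0)$ is zero-sum in $C_n$, together with some number of copies of $g_2$ chosen so that the residue sum in $\langle g_2\rangle$ vanishes modulo $nm$, and the length-$nm$ condition forces strong constraints on $|T_0|$ modulo $n$. Applying Graham's theorem for cyclic groups (Gao--Hamidoune--Wang) to $\pi(T)$, I expect $\pi(T)$ to take at most two distinct values with specific multiplicities; lifting these residues to $G$ and choosing a complementary generator $g_1$ (either of order $n$ or of order $nm$) yields forms (1)--(4). Form (5) arises as the separate case in which $v_{g_2}(S) < 2nm-1$ and a second element $g_1$ of order $nm$ also has multiplicity $\ge n$, in which case both heavy elements are analysed symmetrically via the joint subgroup $\langle g_1, g_2\rangle$ and the parameters $s,t$ are read off from how the length-$nm$ zero-sums distribute their $g_1$- and $g_2$-counts.

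The main obstacle will be the passage from the mere existence of a length-$nm$ zero-sum to a very heavy element with $v_{g_2}(S)$ close to $2nm-1$. For the symmetric case $C_n\oplus C_n$ treated in \cite{GHLYZ} one can play the two factors against each other, but here $C_{nm}$ is asymmetric; one must carefully exploit $\mathsf D(G) = n+nm-1$ together with the structure of long zero-sum-free sequences in $G$ to push the multiplicity upward. A secondary difficulty is disentangling form (5) from the rest: there the natural quotient by $\langle g_2\rangle$ is less informative, and the exact exponents $sn+tn-1$ and $2nm+n(1-s)-tn-1$ must be pinned down by a finer two-variable argument tracking simultaneous $g_1$- and $g_2$-residues in the length-$nm$ zero-sums.
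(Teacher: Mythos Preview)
Your plan has a genuine gap at precisely the step you label the ``main obstacle'': producing a heavy element. After removing one length-$nm$ zero-sum $T$, the complement $ST^{-1}$ has length exactly $\mathsf D(G)-1=nm+n-2$ and is zero-sum free, so there is nothing further to ``peel off inductively''; no iteration is possible, and nothing in your sketch forces any element to have multiplicity $\ge nm$. (Incidentally, $s(C_n\oplus C_{nm})=2n+2nm-3$, not $\mathsf D(G)+\exp(G)-1$; the number $n+2nm-2$ you compute is $\disc(G)-1$, and existence of a zero-sum subsequence follows simply from $|S|\ge\mathsf D(G)$.) Your quotient step is also not set up correctly: even granting a heavy $g_2$, the image $\pi(T)$ in $C_n$ need not have length $n$ or $2n-1$, and the hypothesis that every zero-sum of $S$ has length $nm$ does not imply that every zero-sum of $\pi(T)$ has the same length, so Graham's theorem does not apply as stated.

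The paper avoids both problems by a different route. Since $ST^{-1}$ is zero-sum free of length $\mathsf D(G)-1$, the sequence $ST^{-1}\cdot(-\sigma(ST^{-1}))$ is a minimal zero-sum sequence of length $\mathsf D(G)$, and Schmid's structure theorem \cite{S2010} classifies such sequences over $C_n\oplus C_{nm}$ explicitly into two parametric families. This gives the structure of $ST^{-1}$ directly, with no heavy-element argument needed. To recover $T$, the paper proves the simple lemma $\supp(T)\cap\Sigma_{\ge 2}(ST^{-1})=\emptyset$; since $\Sigma(ST^{-1})=G\setminus\{0\}$, this forces every term of $T$ to lie among the one or two elements of $G\setminus\{0\}$ not expressible as a sum of $\ge 2$ terms of $ST^{-1}$, and in each subcase these missing elements are computed directly from Schmid's classification. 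The heavy element $g_2^{nm}$ (or $g_1^{nm}$) then emerges as a consequence, not as an input.
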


The rest of the paper is organized as follows. Section 2 provides some basic notation and preliminaries. Section 3 gives the proof of our main result.

\section{Preliminaries}

Throughout this paper, our notation and terminology are consistent with
\cite{Ga-Ge06, Ge-Ha06} and we briefly present some key concepts.
Let $\mathbb{Z}$ denote the set of integers, and let $\mathbb{N}$ denote
the set of positive integers, $\mathbb{N}_{0} = \mathbb{N}\cup\{0\}$. 
For  real numbers $a, b \in \mathbb R$, we
set $[a, b] = \{ x \in \mathbb Z \colon a \le x \le b\}$.

 Let $G$ be an
abelian group. A family $(e_i)_{i \in I}$ of nonzero elements of
$G$ is said to be {\it independent} if
\[
\sum_{i \in I} m_ie_i =0 \quad \text{implies} \quad m_i e_i =0 \quad
\text{for all }  i \in I, \quad \mbox{ where } m_i\in \Z\,.
\]
If $I = [1, r]$ and $(e_1, \ldots, e_r)$ is independent, then we
simply say that $e_1, \ldots, e_r$ are independent elements of $G$.
The tuple $(e_i)_{i \in I}$ is called a {\it basis} if $(e_i)_{i \in
I}$ is independent and $\langle \{e_i \colon i \in I \} \rangle = G$.
If  $1 < |G| < \infty$, then we have
\[
G \cong C_{n_1} \oplus \cdots \oplus C_{n_r}  ,
\]
where $C_n$ denotes a cyclic group with $n$ elements, $i \in \N$ and $1<n_1\t \cdots\t n_r$. Then $r=\mathrm r(G)$ is the {\it rank}  of $G$ and
$n_r=\exp(G)$ is the \emph{exponent} of $G$.


We denote by $\mathcal{F}(G)$ the free (abelian, multiplicative)
monoid with basis $G$. An element $S\in \mathcal{F}(G)$
is called a \emph{sequence} over $G$ and will be written in the form
$$S = g_{1}\cdot\ldots\cdot g_{l}=\prod_{g\in G}g^{\mathrm{v}_{g}(S)},$$
where $\mathrm{v}_{g}(S)\geq 0$ is called the \emph{multiplicity} of $g$ in $S$,
and we call
\begin{itemize}
\item $\mathrm{supp}(S)=\{g\in G\mid\mathrm{v}_{g}(S)>0\}$ the \emph{support} of $S$,
\item $|S| = l =\sum_{g\in G}\mathrm{v}_{g}(S)\in \mathbb{N}_{0}$ the \emph{length} of $S$,
\item $\sigma(S)=\sum_{i=1}^{l}g_{i}=\sum_{g\in G}\mathrm{v}_{g}(S)g\in G$ the \emph{sum} of $S$,
\item $\Sigma_k(S)=\{\sum_{i\in I}g_i\mid I\subset[1, l] \ \text{with}\ |I|=k\}$ the \emph{set of $k$-term subsums} of $S$,
for all $k\in\mathbb N$,
\item $\Sigma_{\geq k}(S)=\bigcup_{j\geq k}\Sigma_j(S)$,
\item $\Sigma(S)=\Sigma_{\geq 1}(S)$ the \emph{set of all subsums} of $S$,
\item $T=\prod_{g\in G}g^{\mathrm{v}_{g}(T)}$ a \emph{subsequence} of $S$ if $\mathrm{v}_{g}(T)\leq\mathrm{v}_{g}(S)$ for all $g\in G$,
\item $T$ a \emph{proper subsequence} of $S$ if $T$ is a subsequence of $S$ and $1\leq|T|<|S|$,
\item $ST^{-1}=\prod_{g\in G}g^{\mathrm{v}_{g}(S)-\mathrm{v}_{g}(T)}$ the subsequence obtained from $S$ by deleting $T$,
\item $S$ a \emph{zero-sum sequence} if $\sigma(S)=0$,
\item $S$ a \emph{zero-sum free sequence} if there is no nonempty zero-sum subsequence of $S$,
\item $S$ a \emph{minimal zero-sum sequence} if it is zero-sum and has no proper zero-sum subsequence.
\end{itemize}

For a finite abelian group $G$, let $\mathsf D(G)$ denote the Davenport constant of $G$,
which is defined as the smallest positive integer $d$ such that every
sequence over $G$ of length at least $d$ has a nonempty zero-sum
subsequence.

We next give several lemmas which will be used in the sequel.

\begin{lemma}\cite[Theorem 5.8.3]{Ge-Ha06}\label{eta}
Let $G=C_n\oplus C_{nm}$ with $n,m$ be integers. Then $\mathsf D(G)=n+nm-1$.
\end{lemma}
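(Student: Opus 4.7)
My plan is to combine three tools: the result from the introduction (due to Gao et al.) that $\mathcal L_1(G)=\{\exp(G)\}$ for every finite abelian group of rank at most two, which forces the common length of all nonempty zero-sum subsequences of $S$ to be exactly $nm$; Lemma~\ref{eta}, giving $\mathsf D(G)=nm+n-1$; and the numerical identity $|S|=2nm+n-2=\mathsf D(G)+(nm-1)$, whose tightness drives the extremal analysis. A first useful consequence is that every zero-sum subsequence of $S$ is automatically a \emph{minimal} zero-sum sequence of length exactly $nm$, since any proper zero-sum subsequence would also be forced to have length $nm$.

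Next I would locate a heavy element. Let $g\in\supp(S)$ have maximal multiplicity $k=\mathsf v_g(S)$. The key preparatory step is the dichotomy that either (A) $k\geq 2nm-1$, or else (B) $|\supp(S)|=2$ and both multiplicities are of the form $an-1$ for positive integers $a$. Each branch is proved by contradiction: assuming the contrary, one works inside the residue $Sg^{-k}$ and manufactures two zero-sum subsequences of $S$ of \emph{distinct} lengths by peeling off differing numbers of copies of $g$ and combining them with carefully chosen subsums of the residue, invoking Lemma~\ref{eta} to guarantee that those subsums exist. A secondary argument then forces $\ord(g)=nm$ in the main case, so that $g$ may play the role of $g_2$ in the theorem.

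The five forms then fall out by case analysis. In branch (A) set $g_2:=g$; then $g_2^{2nm-1}$ divides $S$ and the residue $R=Sg_2^{-(2nm-1)}$ has length exactly $n-1$. Any zero-sum $T\mid S$ has length $nm$, so the number of copies of $g_2$ inside $T$ is pinned down modulo $nm$ by the non-$g_2$ part of $T$; the common-length condition thus translates into a short system of congruences on the first coordinates of the elements of $R$ with respect to a basis $(g_1,g_2)$ chosen adaptively so that $\ord(g_1)\in\{n,nm\}$. Solving that system, and distinguishing whether $\ord(g_1)=n$ or $nm$ and whether the residual first coordinates are all equal or generic, produces forms (1), (2), (3) and (4). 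Branch (B) yields form (5) by bookkeeping the admissible pairs $(s,t)\in[1,m]\times[0,m]$. The converse direction---that each of the five listed forms actually satisfies the common-length property---is then a routine verification using Lemma~\ref{eta}.

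The main obstacle I anticipate is proving the heavy-element dichotomy and, within branch (A), reading off the four fine shapes (1)--(4) from the congruence system on the $n-1$ elements of $R$. A delicate degenerate case occurs when $R$ essentially collapses to a power of a single element, which is exactly where branch (A) meets branch (B); handling this junction cleanly, while keeping the basis $\{g_1,g_2\}$ compatible with the normalisations of the theorem, will require careful bookkeeping of the generating sets of $G$ and of the automorphisms that realise them.
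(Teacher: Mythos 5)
Your proposal does not prove the statement it is supposed to prove. The statement in question is Lemma~\ref{eta}, the Davenport constant formula $\mathsf D(C_n\oplus C_{nm})=n+nm-1$, which the paper simply cites from Geroldinger--Halter-Koch (Theorem 5.8.3; the result goes back to Olson's determination of $\mathsf D$ for rank-two groups). What you have written instead is a proof sketch of Theorem~\ref{t1}, the paper's main structural result --- and, worse, your sketch explicitly lists ``Lemma~\ref{eta}, giving $\mathsf D(G)=nm+n-1$'' as one of its three input tools. An argument that assumes the statement as a hypothesis cannot serve as a proof of it; as it stands the attempt is circular with respect to the target and addresses none of its actual content.

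A genuine proof of Lemma~\ref{eta} would have two halves, neither of which appears in your write-up. The lower bound $\mathsf D(G)\geq n+nm-1$ is easy: the sequence $e_1^{n-1}e_2^{nm-1}$ over a basis $\{e_1,e_2\}$ with $\ord(e_1)=n$, $\ord(e_2)=nm$ is zero-sum free of length $n+nm-2$. The upper bound $\mathsf D(G)\leq n+nm-1$ is the hard part; the classical route reduces to $p$-groups via the inductive method (showing $\mathsf D(G)=\mathsf d^*(G)+1$ for rank-two groups), using Olson's theorem that $\mathsf D(C_{p^a}\oplus C_{p^b})=p^a+p^b-1$ together with a transfer argument across the primary decomposition. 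None of the machinery in your sketch (the heavy-element dichotomy, the analysis of $\Sigma_{\geq 2}(ST^{-1})$, the case split into the five forms) bears on either half. If your intention was to outline the proof of Theorem~\ref{t1}, the outline is broadly consistent with the paper's strategy of extracting a zero-sum $T$ of length $nm$, applying Lemma~\ref{D(G)-1} and Lemma~\ref{D(G)} to $ST^{-1}$, and classifying cases --- but that is a different statement, and the present lemma still requires its own (cited or reproduced) proof.
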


\begin{lemma}\cite[Theorem 1.2]{Ga-Zh-Zh15}\label{disc}
Let $G$ be a finite abelian group with $\mathrm r(G)\leq 2$. Then $\mathrm {disc}(G)=\mathsf D(G)+\exp(G)$.
\end{lemma}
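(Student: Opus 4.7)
By Lemmas~\ref{eta} and~\ref{disc}, $|S| = \mathrm{disc}(G) - 1 = \mathsf{D}(G) + \exp(G) - 1 = 2nm + n - 2$. The result cited from \cite{GHLYZ} that $\mathcal{L}_{1}(G) = \{\exp(G)\}$ for abelian groups of rank at most two fixes the common length of all nonempty zero-sum subsequences of $S$ to be $\exp(G) = nm$.

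The key reduction is the following. For every nonempty zero-sum $Z \subseteq S$ (necessarily of length $nm$), the complement $U := S Z^{-1}$ is zero-sum free: any nonempty zero-sum $Z' \subseteq U$ would give $Z Z'$ a zero-sum of length $2nm \neq nm$, contradicting the hypothesis. Since $|U| = nm + n - 2 = \mathsf{D}(G) - 1$, $U$ is \emph{maximally} zero-sum free. Appealing to the structural classification of maximally zero-sum free sequences over $G = C_n \oplus C_{nm}$ (``Property B'' in the rank-$2$ Davenport-constant literature), every such $U$ has the form $U = e^{\,nm-1}\, V$ with $e \in G$ of order $nm$, $|V| = n-1$, and the projection $\pi(V)$ to $G/\langle e\rangle \cong C_n$ equal to $\bar h^{n-1}$ for a generator $\bar h$.

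Set $g_2 := e$ and $k := \mathrm{v}_{g_2}(S)$. Since $\mathrm{v}_{g_2}(U) = nm - 1$ and $k = (nm-1) + \mathrm{v}_{g_2}(Z)$, one has $k \in [nm - 1,\, 2nm - 1]$; moreover, the ``dominant'' element $g_2$ is unique (two distinct order-$nm$ elements each of multiplicity $\ge nm$ would produce a zero-sum of length $2nm$). I then split into two cases. \emph{Case A} ($k = 2nm - 1$): taking $Z = g_2^{nm}$, the remainder $T := S g_2^{-(2nm-1)}$ has length $n - 1$. For any nonempty $T' \subseteq T$ with $\sigma(T') = c\, g_2$ ($c \in [0, nm-1]$), the subsequence $g_2^{nm-c}\, T'$ is zero-sum of length $nm$ (forcing $|T'| = c$), whereupon $g_2^{2nm-c}\, T' \subseteq S$ is a zero-sum of length $2nm \neq nm$, a contradiction. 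Hence $\pi(T)$ is zero-sum free in $C_n$ of maximal length $n-1$, so $\pi(T) = \bar g^{n-1}$ for a generator $\bar g$; lifting gives $T = \prod_{i=1}^{n-1}(g_1 + z_i g_2)$ for some $g_1$ projecting to $\bar g$ and integers $z_i$. Depending on whether a shift $g_1 \mapsto g_1 + c\, g_2$ achieves $\ord(g_1) = n$ (equivalently, whether the $z_i$'s admit a common shift making at least one residue modulo $m$ vanish), one obtains forms (1), (2), or (3) with $\ord(g_1) = n$, distinguished by the arithmetic pattern of the $z_i$'s; otherwise $g_1$ must have order $nm$ and one obtains form~(4). \emph{Case B} ($nm - 1 \leq k \leq 2nm - 2$): extra $g_2$-copies remain outside any single $Z = g_2^{nm}$. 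Combining the Property-B decomposition $U = g_2^{nm-1}\, V$ across varying $Z$ with a combinatorial exclusion of third elements in $\mathrm{supp}(S)$ forces $\mathrm{supp}(S) = \{g_1, g_2\}$ with $\ord(g_1) = nm$. Expanding the zero-sum condition in coordinates via the relation $n(g_1 - c' g_2) = 0$ implied by $\{g_1, g_2\}$ generating $G$, the zero-sums of $g_1^A g_2^B$ are precisely the $g_1^{na'} g_2^{nb'}$ with $a' + b' \equiv 0 \pmod m$; requiring all nonempty such subsequences to have length $nm$ forces $a' + b' = m$ for each, which in turn forces $A \equiv B \equiv -1 \pmod n$ and $A + B = (2m+1)n - 2$, yielding the parametrization of form~(5) by $(s, t) \in [1, m] \times [0, m]$.

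\textbf{Main obstacle.} The principal challenge is Case~B: ruling out any third element of $\mathrm{supp}(S)$ when $k < 2nm - 1$ requires a delicate combinatorial argument showing that any additional element, combined with appropriate powers of $g_1$ and $g_2$, would create a zero-sum subsequence of length other than $nm$. A secondary difficulty is the subcase analysis in Case~A, where distinguishing forms (1), (2), and (3) depends on the precise residues $z_i \bmod m$ and the available order-$n$ lifts of $g_1$; in particular, form~(2) corresponds to the unique exceptional configuration where after canonical shifting exactly one $z_i$ is nonzero, while form~(3) corresponds to all $z_i$ vanishing.
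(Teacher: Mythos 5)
Your proposal does not prove the statement it is attached to. Lemma~\ref{disc} asserts the exact value $\mathrm{disc}(G)=\mathsf D(G)+\exp(G)$ for every finite abelian group of rank at most two; a proof of it must supply two things: a lower bound (a sequence of length $\mathsf D(G)+\exp(G)-1$ all of whose nonempty zero-sum subsequences have equal length, e.g.\ $g^{2\exp(G)-1}$ times a suitable zero-sum free sequence) and an upper bound (every sequence of length $\mathsf D(G)+\exp(G)$ contains two nonempty zero-sum subsequences of distinct lengths). Your text addresses neither. What you have written is a sketch of the paper's main result, Theorem~\ref{t1} --- the inverse/structural characterization over $C_n\oplus C_{nm}$ --- and your very first sentence invokes Lemma~\ref{disc} itself (``By Lemmas~\ref{eta} and~\ref{disc}, $|S|=\mathrm{disc}(G)-1=\dots$''), so read as a proof of Lemma~\ref{disc} the argument is circular.

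For the record, the paper offers no proof of this lemma either: it is quoted verbatim from Gao, Zhao and Zhuang \cite{Ga-Zh-Zh15}, and the intended justification is the citation. If your sketch was meant for Theorem~\ref{t1} instead, note that it also departs from the paper's actual route: the paper does not invoke a ``Property B''--style classification of maximal zero-sum free sequences directly, but instead observes that $ST^{-1}(-\sigma(ST^{-1}))$ is a minimal zero-sum sequence of length $\mathsf D(G)$, applies Schmid's classification (Lemma~\ref{D(G)}) to it, and then uses Lemma~\ref{D(G)-1} together with the exclusion principle of Lemma~\ref{empty} ($\supp(T)\cap\Sigma_{\geq 2}(ST^{-1})=\emptyset$) to pin down $T$ case by case. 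Your Case~B, in particular, leans on an unproved combinatorial exclusion of third support elements that the paper's Lemma~\ref{empty} machinery is designed to replace.
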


\begin{lemma}\cite[Theorem 1.4]{GHLYZ}\label{l}
Let $G$ be a finite abelian group with $\mathrm r(G)\leq 2$. Then $\mathcal{L}_1(G)=\{\exp(G)\}$.
\end{lemma}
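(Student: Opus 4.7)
By Lemma \ref{disc} we have $|S|=\mathsf D(G)+\exp(G)-1=n+2nm-2$, and by Lemma \ref{l} every nonempty zero-sum subsequence of $S$ has length exactly $\exp(G)=nm$. The decisive observation is that for any nonempty zero-sum subsequence $T$ of $S$, the complement $ST^{-1}$ is zero-sum free: otherwise a nonempty zero-sum $U\mid ST^{-1}$ would satisfy $|U|=nm$ (as a zero-sum subsequence of $S$), and then $TU$ would be a zero-sum subsequence of $S$ of length $2nm$, contradicting Lemma \ref{l}. Hence $ST^{-1}$ is an extremal zero-sum free sequence of length $\mathsf D(G)-1=n+nm-2$. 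Since $|S|$ is strictly greater than $\mathsf D(G)$, $S$ admits many zero-sum subsequences $T$, and the constraint that every such complement is extremal zero-sum free is the main structural lever.

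From this I would extract an element $g\in\mathrm{supp}(S)$ of maximal multiplicity $v=\mathrm v_g(S)$ and show $\ord(g)=nm$. The reason is that $v\geq\ord(g)$ would produce the zero-sum subsequence $g^{\ord(g)}$, whose length $\ord(g)$ must equal $nm$; a pigeonhole estimate on $|S|=n+2nm-2$, together with sharp structural bounds on the extremal zero-sum free complements (drawing on inverse-Davenport type results for rank-two groups), forces $v$ to be large enough for this argument to apply. I would then split into two regimes: (A) some element has multiplicity exactly $2nm-1$, and (B) every element has multiplicity at most $2nm-2$.

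In regime (A), writing $S=g^{2nm-1}R$ with $\ord(g)=nm$ and $|R|=n-1$, the hypothesis says: for every subsequence $R'\mid R$ and every $0\leq k\leq 2nm-1$, if $R'g^{k}$ is zero-sum then $|R'|+k=nm$. Working through the possible coset structures of $R$ modulo $\langle g\rangle$ (the cases being distinguished by the order of a second generator, $n$ versus $nm$) yields forms (1)--(4). The main obstacle is regime (B), which corresponds to form (5). Here I would first show $\mathrm{supp}(S)=\{g_1,g_2\}$ with $\ord(g_1)=nm$, parametrize $g_2$ modulo $\langle g_1\rangle$, and write $a=\mathrm v_{g_1}(S)$, $b=\mathrm v_{g_2}(S)$. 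The condition that every minimal zero-sum subsequence of $g_1^{a}g_2^{b}$ has length exactly $nm$ splits into separate equations for the pure-$g_1$ relations and the mixed relations, producing the parameters $s\in[1,m]$ and $t\in[0,m]$. The most delicate bookkeeping will be verifying that the stated range of $(s,t)$ is exactly the set of admissible pairs---ruling out, in particular, any alternative choice of generating set or coset representative that might smuggle in sequences not of the listed forms.
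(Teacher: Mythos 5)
There is a genuine gap here, and it is a fundamental one: your proposal does not prove the statement at all. The statement to be established is Lemma \ref{l} itself, namely that $\mathcal{L}_1(G)=\{\exp(G)\}$ for every finite abelian group of rank at most two --- i.e.\ that (i) some sequence of length $\mathrm{disc}(G)-1$ has all its nonempty zero-sum subsequences of the common length $\exp(G)$, and (ii) no other common length $t$ is possible. Your very first sentence invokes ``by Lemma \ref{l} every nonempty zero-sum subsequence of $S$ has length exactly $\exp(G)=nm$,'' which assumes the conclusion you are supposed to derive; as a proof of Lemma \ref{l} the argument is circular from the outset. What you have actually sketched is the proof of Theorem \ref{t1} (the structural classification of $S$ into forms (1)--(5) over $C_n\oplus C_{nm}$), which is a different and logically downstream statement. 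Note also that Lemma \ref{l} concerns all groups of rank at most two, including cyclic groups, whereas your sketch is specialized to $C_n\oplus C_{nm}$ with $m\ge 2$.

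For the record, the paper does not prove Lemma \ref{l} internally either: it is imported verbatim as Theorem 1.4 of the cited reference of Gao, Hong, Li, Yin and Zhao (Acta Arith.\ 196 (2020)). So the honest comparison is that the paper's ``proof'' is a citation, and a self-contained argument would have to rule out every candidate length $t\neq\exp(G)$, typically by combining $\mathrm{disc}(G)=\mathsf D(G)+\exp(G)$ (Lemma \ref{disc}) with the inverse results on zero-sum free sequences of length $\mathsf D(G)-1$ and on minimal zero-sum sequences of length $\mathsf D(G)$ (Lemmas \ref{D(G)-1} and \ref{D(G)}) to show that any admissible $S$ forces a zero-sum subsequence of length exactly $\exp(G)$, whence $t=\exp(G)$. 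None of that appears in your write-up. If your intent was to outline the proof of Theorem \ref{t1}, the outline is broadly consonant with the paper's strategy (pass to a zero-sum subsequence $T$ of length $nm$, observe $ST^{-1}$ is zero-sum free of length $\mathsf D(G)-1$, append $-\sigma(ST^{-1})$ and apply Lemma \ref{D(G)}, then use Lemma \ref{empty} to pin down $T$), but it should be presented as such and not as a proof of Lemma \ref{l}.
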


\begin{lemma}\cite[Proposition 5.1.4]{Ge-Ha06}\label{D(G)-1}
Let $G$ be a finite abelian group and let $S$ be a zero-sum free sequence over $G$ with $|S|=\mathsf D(G)-1$. Then $|\Sigma(S)|=|G|-1$.
\end{lemma}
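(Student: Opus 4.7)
The plan is to extract a zero-sum subsequence $W$ of length $\exp(G)=nm$, show that the complement $T=SW^{-1}$ is a maximal zero-sum free sequence, exploit the rigidity forced on the pair $(W,T)$ by the common-length hypothesis, and then perform a case analysis based on the possible choices of a generating set $\{g_1,g_2\}$ of $G$. By Lemma~\ref{l}, every nonempty zero-sum subsequence of $S$ has length exactly $\exp(G)=nm$. Since $|S|=n+2nm-2\ge\mathsf D(G)$, we may pick a zero-sum subsequence $W$ of $S$ with $|W|=nm$ and set $T=SW^{-1}$, whence $|T|=\mathsf D(G)-1=n+nm-2$. If $T$ contained a nonempty zero-sum subsequence $U$, then $|U|=nm$ and $WU$ would be a zero-sum subsequence of $S$ of length $2nm\ne nm$, contradicting the hypothesis. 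Hence $T$ is zero-sum free, and Lemma~\ref{D(G)-1} gives $\Sigma(T)=G\setminus\{0\}$.

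The key rigidity is: whenever $R$ is a subsequence of $W$ and $T'$ a subsequence of $T$ with $\sigma(T')+\sigma(R)=0$ and $T'R$ nonempty, the product $T'R$ is a zero-sum subsequence of $S$, so $|T'|+|R|=nm$. Specialising to $R=h$ with $h\in\supp(W)$, every $T'$ in $T$ with $\sigma(T')=-h$ has length $nm-1$; specialising to $R=Wh^{-1}$, every $T'$ with $\sigma(T')=h$ has length exactly $1$, which forces $h\in\supp(T)$. Hence $\supp(W)\subseteq\supp(T)$ and $\mathrm v_h(S)\ge\mathrm v_h(W)+1$ for each $h\in\supp(W)$. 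Moreover, for any $g\in\supp(S)$ with $\ord(g)<nm$ one has $\mathrm v_g(S)\le\ord(g)-1$, since otherwise $g^{\ord(g)}$ would be a zero-sum subsequence of $S$ of length $\ord(g)\ne nm$.

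Next, we invoke the classical structure theorem for zero-sum free sequences of length $\mathsf D(G)-1$ over the rank-two group $C_n\oplus C_{nm}$, applied to $T$: there exists $h\in\supp(T)$ with $\ord(h)=nm$ and $\mathrm v_h(T)\ge nm-1$. A refined use of the rigidity (applied to subsequences $R$ of $W$ whose sums range over $\langle h\rangle$) shows that $h$ also lies in $\supp(W)$, and in fact $\mathrm v_h(S)\in[nm,2nm-1]$; replacing $W$ by $h^{nm}$, we obtain $T=h^{nm-1}\cdot T_0$ with $|T_0|=n-1$. Setting $g_2:=h$, a third application of the rigidity combined with $\Sigma(T)=G\setminus\{0\}$ forces the terms of $T_0$ to lie in a single coset $g_1+\langle g_2\rangle$ for some $g_1\in G$. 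If $\ord(g_1)=n$, matching with the remaining copies of $g_1$ and $g_2$ in $S$ yields forms (1), (2), (3); if $\ord(g_1)=nm$, the analogous matching (with the \emph{skew} generating set) yields forms (4) and (5).

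The main obstacle is the final case analysis. For each candidate shape of $T_0$ one must verify both that the resulting $S$ satisfies the common-length hypothesis and that no alternative shape is admissible; the parameter tuples $(x_i)$, $(y_i)$, $(s,t)$ then arise from enumerating the admissible $T_0$ by repeatedly invoking the rigidity on carefully chosen $R$ in $W$. Form (5) is especially delicate: when both $g_1$ and $g_2$ have order $nm$, the zero-sum subsequence of length $nm$ in $S$ is not unique, so one must verify that the classification is independent of the initial choice of $W$.
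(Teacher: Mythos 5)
Your proposal does not address the statement at hand. The statement to be proved is Lemma~\ref{D(G)-1}: for \emph{any} finite abelian group $G$ and any zero-sum free sequence $S$ of length $\mathsf D(G)-1$, one has $|\Sigma(S)|=|G|-1$. What you have written instead is an outline of the proof of Theorem~\ref{t1} (the classification of the forms (1)--(5) over $C_n\oplus C_{nm}$). Worse, your argument explicitly \emph{invokes} Lemma~\ref{D(G)-1} in its first paragraph (``Lemma~\ref{D(G)-1} gives $\Sigma(T)=G\setminus\{0\}$''), so as a proof of that lemma it is circular. Nothing in your text establishes the claimed equality $|\Sigma(S)|=|G|-1$ for a general group, and the hypotheses you use (rank two, the common-length assumption on zero-sum subsequences, $|S|=\mathrm{disc}(G)-1$) are not part of the lemma's hypotheses.

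For the record, the lemma is a cited result (\cite[Proposition 5.1.4]{Ge-Ha06}) with a short direct proof: since $S$ is zero-sum free, $0\notin\Sigma(S)$, so $|\Sigma(S)|\le|G|-1$. Conversely, let $g\in G\setminus\{0\}$ and set $h=-g$. The sequence $Sh$ has length $\mathsf D(G)$, hence contains a nonempty zero-sum subsequence $U$; as $S$ is zero-sum free, $h\mid U$, and $Uh^{-1}$ is a subsequence of $S$ with sum $-h=g$. It is nonempty because $g\neq 0$, so $g\in\Sigma(S)$. Thus $\Sigma(S)=G\setminus\{0\}$. You should rework your submission around an argument of this kind rather than around the structure of the main theorem.
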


\begin{lemma}\cite[Theorem 3.2]{S2010}\label{D(G)}
Let $G=C_{n_1}\oplus C_{n_2}$ be a finite abelian group with $1< n_1\mid n_2$ and $S$ be a sequence over $G$. Then $S$ is a minimal zero-sum sequence of length $|S|=n_1+n_2-1$ if and only if $S$ has one of the following forms:
\begin{itemize}
\item[(1)] $$S=e_j^{\ord(e_j)-1}\prod_{\nu=1}^{\ord(e_k)}(x_\nu e_j+e_k),$$
where  $\{e_1, e_2\}$ is a basis of $G$ with $\ord(e_i)=n_i$ for $i\in[1,2]$, $\{j,k\}=\{1,2\}$, $x_1,\ldots, x_{\ord(e_k)}\in [0, \ord(e_j)-1]$ and $x_1+\cdots+x_{\ord(e_k)}\equiv 1 \pmod {\ord(e_j)}$.
\item[(2)] $$S=g_1^{sn_1-1}\prod_{\nu=1}^{n_2+(1-s)n_1}(-x_\nu g_1+g_2),$$
where  $\{g_1, g_2\}$ is a generating set of $G$ with $\ord(g_2)=n_2$, $x_1,\ldots, x_{n_2+(1-s)n_1}\in [0, n_1-1]$ and $x_1+\cdots+x_{n_2+(1-s)n_1}=n_1-1$, $s\in [1, n_2/n_1]$ and either $s=1$ or $n_1g_1=n_1g_2$.
\end{itemize}
\end{lemma}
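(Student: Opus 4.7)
The plan is to first decompose $S = T \cdot W$ where $W$ is a zero-sum subsequence and $T$ its complement, then classify $T$ via Lemma \ref{D(G)}, then determine $W$ from the uniformity condition, and finally match the resulting $S$'s with the five cases.

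First, by Lemma \ref{l}, all nonempty zero-sum subsequences of $S$ have length $\exp(G) = nm$. Since $|S| = \mathrm{disc}(G) - 1 = \mathsf D(G) + nm - 1 \geq \mathsf D(G)$, the sequence $S$ has a zero-sum subsequence $W$ with $|W| = nm$; I set $T = SW^{-1}$ of length $\mathsf D(G) - 1 = n + nm - 2$. If $T$ contained a nonempty zero-sum subsequence $V$, then $V$ would be disjoint from $W$ with $|V| = nm$, so $WV$ would be a zero-sum subsequence of $S$ of length $2nm \neq nm$, contradicting the hypothesis. Hence $T$ is zero-sum free, and by Lemma \ref{D(G)-1}, $\Sigma(T) = G \setminus \{0\}$. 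Setting $z_0 = -\sigma(T) \in G \setminus \{0\}$, the sequence $T z_0$ is minimal zero-sum of length $\mathsf D(G)$, so by Lemma \ref{D(G)} it has one of two standard forms; removing the term $z_0$ yields the structure of $T$ and produces six sub-cases (four from Form (1), paired over the two choices of $\{j,k\}=\{1,2\}$ and over which type of term equals $z_0$, and two from Form (2)).

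For each sub-case I would determine the admissible $W$ via the key observation: for every proper nonempty subsequence $V$ of $W$, each subsequence $T'$ of $T$ with $\sigma(T') = -\sigma(V)$ must satisfy $|T'| = nm - |V|$, since otherwise $T'V$ would be a zero-sum subsequence of $S$ of length different from $nm$. Parametrizing the subsequences of $T$ within each sub-case and applying this constraint forces restrictions on the coefficients $x_\nu$ in the structure of $T$ and identifies the support of $W$. Typically $W$ will be forced to equal $g^{nm}$ for a specific element $g \in G$ of order $nm$ (leading to Cases (1)--(4)); in the binary sub-case from Form (2), with $\ord(g_1) = \ord(g_2) = nm$, a mixed $W = g_1^{n(m-q)} g_2^{nq}$ also becomes possible, producing Case (5).

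The main obstacle will be the combinatorial bookkeeping in this sub-case analysis: ruling out non-standard $W$'s requires showing that any additional term in the support of $W$ would produce a representation $-\sigma(V) = \sigma(T')$ with $|T'| \neq nm - |V|$, and this depends delicately on the coefficients $x_\nu$ appearing in the form of $T$. In the Form (2) sub-cases the auxiliary condition $s = 1$ or $ng_1 = ng_2$ from Lemma \ref{D(G)} is essential: it governs which binary sums $ag_1 + bg_2$ vanish in $G$ and thereby controls the admissible multiplicities $sn + tn - 1$ and $2nm + n(1-s) - tn - 1$ in Case (5). Once all six sub-cases are analyzed and generators are relabeled so that $\{g_1, g_2\}$ generates $G$ with $\ord(g_2) = nm$, the resulting $S$'s will be shown to fall into exactly the five forms stated in the theorem.
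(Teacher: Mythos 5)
Your proposal does not prove the statement at hand. The statement is Lemma~\ref{D(G)} itself --- Schmid's classification of minimal zero-sum sequences of maximal length $n_1+n_2-1$ over $C_{n_1}\oplus C_{n_2}$ --- which the paper quotes from \cite[Theorem 3.2]{S2010} without proof. What you have written is instead an outline of the proof of the paper's main result, Theorem~\ref{t1}: you decompose $S=TW$, pass to $T(-\sigma(T))$, and then say ``by Lemma~\ref{D(G)} it has one of two standard forms.'' As a proof of Lemma~\ref{D(G)} this is circular, since you invoke the very classification you are supposed to establish; and the objects you reason about ($\mathrm{disc}(G)$, sequences of length $\mathrm{disc}(G)-1$ all of whose zero-sum subsequences have equal length, the set $\mathcal{L}_1(G)$) do not appear in the statement of the lemma at all, which concerns only minimal zero-sum sequences of length $\mathsf D(G)$.

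Beyond the mismatch, be aware that the lemma is not something one can reprove by the kind of bookkeeping you describe. Schmid's theorem is a deep structural result: its proof rests on the (nontrivial) fact that $C_n\oplus C_n$ satisfies Property B, i.e.\ every minimal zero-sum sequence of length $2n-1$ over $C_n\oplus C_n$ contains some element with multiplicity $n-1$, together with an inductive transfer to $C_{n_1}\oplus C_{n_2}$. None of the tools available in this paper (Lemmas~\ref{eta}--\ref{D(G)-1}) suffice to derive it. If your assignment was to prove Lemma~\ref{D(G)}, you need an entirely different argument; if it was to prove Theorem~\ref{t1}, then your outline is pointed in the right direction (it matches the paper's strategy of extracting a length-$nm$ zero-sum subsequence $T$, showing $ST^{-1}$ is zero-sum free with $\Sigma(ST^{-1})=G\setminus\{0\}$, applying Lemma~\ref{D(G)} to $ST^{-1}(-\sigma(ST^{-1}))$, and then constraining $T$ via the analogue of Lemma~\ref{empty}), but it should be resubmitted against that statement, with the case analysis actually carried out rather than described.
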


\medskip

\bigskip
\section{Proof of Theorem \ref{t1}}

In this section, we present the proof of our main result. To begin with, we establish a crucial lemma.

\begin{lemma}\label{empty}
Let $S$ be a sequence over a finite abelian group $G$ of length $|S|=\mathrm{disc}(G)-1$,
where all nonempty zero-sum subsequences of $S$ have the same length.
Suppose $T$ is a nonempty zero-sum subsequence of $S$. Then
$$\supp(T)\cap \Sigma_{\geq 2}(ST^{-1})=\emptyset.$$
\end{lemma}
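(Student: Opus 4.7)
The plan is to argue by contradiction: assume some $g\in\supp(T)\cap\Sigma_{\geq 2}(ST^{-1})$ and build from it a second zero-sum subsequence of $S$ whose length differs from $|T|$, contradicting the hypothesis that all nonempty zero-sum subsequences of $S$ share a common length.

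Concretely, let $\ell$ denote that common length, so $|T|=\ell$. By the assumed intersection, pick a subsequence $U$ of $ST^{-1}$ with $|U|\geq 2$ and $\sigma(U)=g$. Since $g\in\supp(T)$, the element $g$ occurs as a term of $T$, so $T'=Tg^{-1}$ is a subsequence of $T$ (and hence of $S$) satisfying $\sigma(T')=\sigma(T)-g=-g$. Because $T'$ is a subsequence of $T$ while $U$ is a subsequence of $ST^{-1}$, the two have disjoint multiplicities in $S$, and thus $T'U$ is genuinely a subsequence of $S$. Its sum is $\sigma(T')+\sigma(U)=-g+g=0$, and its length is
\[
|T'U|=|T|-1+|U|\;\geq\;|T|+1=\ell+1.
\]
So $T'U$ is a nonempty zero-sum subsequence of $S$ whose length exceeds $\ell$, contradicting the hypothesis. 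Therefore no such $g$ exists, and $\supp(T)\cap\Sigma_{\geq 2}(ST^{-1})=\emptyset$.

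The argument is essentially a one-step swap: remove a single term $g$ from $T$ (dropping the length by one) and replace it by a subsum of length at least two that also equals $g$ (raising the length by at least two), yielding a zero-sum subsequence of strictly larger length. There is no real obstacle here; neither $\mathrm{disc}(G)$ nor Lemma \ref{l} is needed in the proof, only the hypothesis that every nonempty zero-sum subsequence of $S$ has the same length. The only point requiring a moment of care is ensuring that $T'$ and $U$ combine as a legitimate subsequence of $S$, which is immediate from $T'\mid T$ and $U\mid ST^{-1}$.
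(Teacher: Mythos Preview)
Your proof is correct and follows essentially the same argument as the paper: assume a common element $g$, take a subsequence $U\mid ST^{-1}$ of length at least two summing to $g$, and replace the single term $g$ in $T$ by $U$ to obtain a zero-sum subsequence of strictly greater length. Your added remark that the hypothesis $|S|=\mathrm{disc}(G)-1$ is not actually used is also correct; only the equal-length assumption matters here.
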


\begin{proof}
Assume to the contrary that there exists a subsequence $T'\mid ST^{-1}$ with $|T'|\geq 2$ such that $\sigma(T')=g$, where $g\mid T$. Then $T'Tg^{-1}$ is a zero-sum subsequence of $S$ with length $|T'Tg^{-1}|> |T|$, which is a contradiction. Hence, $\supp(T)\cap \Sigma_{\geq 2}(ST^{-1})=\emptyset$.
\end{proof}

We are now in position to provide the proof for our main result.
\vskip .5 cm
{\bf Proof of Theorem \ref{t1}.}
By Lemmas \ref{eta} and \ref{disc}, we have $|S|=\mathrm{disc}(G)-1=\mathsf D(G)+\exp(G)-1=n+2nm-2$. And it follows from Lemma \ref{l} that all nonempty zero-sum subsequences of $S$ have the same length $nm$.

Since $|S|=n+2nm-2> \mathsf D(G)=n+nm-1$, there exists a zero-sum subsequence $T$ of $S$ with length $nm$ and $0\nmid S$. Then $|ST^{-1}|=n+nm-2=\mathsf D(G)-1$ and $ST^{-1}$ is zero-sum free. It follows from Lemma \ref{D(G)-1} that $$\Sigma(ST^{-1})=G\setminus\{0\}.$$

It is easy to see that $ST^{-1}(-\sigma(ST^{-1}))$ is a minimal zero-sum sequence of length $nm+n-1=\mathsf D(G)$. And by Lemma \ref{D(G)} we obtain
that $ST^{-1}(-\sigma(ST^{-1}))$ has one of the following forms:
\begin{equation}\label{1}
ST^{-1}(-\sigma(ST^{-1}))=e_u^{\ord(e_u)-1}\prod_{i=1}^{\ord(e_v)}(x_i e_u+e_v),
\end{equation}
where  $\{e_1, e_2\}$ is a basis of $G$ with $\ord(e_1)=n$ and $\ord(e_2)=nm$, $\{u,v\}=\{1,2\}$, $x_1,\ldots, x_{\ord(e_v)}\in [0, \ord(e_u)-1]$ and $x_1+\cdots+x_{\ord(e_v)}\equiv 1 \pmod {\ord(e_u)}$.

\begin{equation}\label{2}
ST^{-1}(-\sigma(ST^{-1}))=g_1^{sn-1}\prod_{i=1}^{nm+(1-s)n}(-y_i g_1+g_2),
\end{equation}
where  $\{g_1, g_2\}$ is a generating set of $G$ with $\ord(g_2)=nm$, $y_1,\ldots, y_{nm+(1-s)n}\in [0, n-1]$ and $y_1+\cdots+y_{nm+(1-s)n}=n-1$, $s\in [1, m]$ and either $s=1$ or $ng_1=ng_2$.

We now divide the remaining proof into the following four cases.

{\bf Case 1.} $ST^{-1}(-\sigma(ST^{-1}))$ is of the form (\ref{1}) with $u=1$ and $v=2$. It follows that $$ST^{-1}(-\sigma(ST^{-1}))=e_1^{n-1}\prod_{i=1}^{nm}(x_ie_1+e_2),$$
where $x_1,\ldots, x_{nm}\in [0, n-1]$ and $x_1+\cdots+x_{nm}\equiv 1 \pmod n$.

{\bf Subcase 1.1.} $ST^{-1}=e_1^{n-1}\prod_{i=1}^{nm-1}(x_ie_1+e_2)$. If $x_i\neq x_j$ for some $i\neq j\in [1,nm-1]$, then
$$\Sigma_{\geq 2}(ST^{-1})=G\setminus\{0,e_1\}.$$
By Lemma \ref{empty} and $0\nmid S$, we obtain that $T$ is of the form $e_1^{nm}$. Therefore, $e_1^{n}$ is a zero-sum subsequence of $S$ with length $n<nm$, which is a contradiction.

Next we assume that $x_1=\cdots =x_{nm-1}$, i.e. $ST^{-1}=e_1^{n-1}(x_1e_1+e_2)^{nm-1}$. Replacing $x_1e_1+e_2$ with $e_2$, we have that $ST^{-1}=e_1^{n-1}e_2^{nm-1}$ and
$$\Sigma_{\geq 2}(ST^{-1})=G\setminus\{0,e_1,e_2\}.$$
If $e_1\mid T$, then $e_1^{n}$ is a zero-sum subsequence of $S$ of length $n<nm$, a contradiction. By Lemma \ref{empty} and $0\nmid S$, we obtain that $T$ is of the form $e_2^{nm}$. Therefore,
$$S=e_1^{n-1}e_2^{2nm-1}.$$
Replacing $e_1$ with $g_1$ and $e_2$ with $g_2$, $\{g_1,g_2\}$ is a generating set of $G$ where $\ord(g_1)=n$ and $\ord(g_2)=nm$. Thus $S$ is of the form (1).

{\bf Subcase 1.2.} $ST^{-1}=e_1^{n-2}\prod_{i=1}^{nm}(x_ie_1+e_2)$ with $\Sigma_{i=1}^{nm}x_i\equiv 1\pmod n$. It follows that $x_i\neq x_j$ for some $i\neq j\in [1,nm]$.
Without loss of generality, we may assume that $0\leq x_1\leq\cdots\leq x_{nm}\leq n-1$.
If $x_i, x_j, x_k$ are pairwise distinct for some $i,j,k\in [1,nm]$ or $x_i-x_j\in[2,n-2]$ for some $i, j\in[1,nm]$, then
$$\Sigma_{\geq 2}(ST^{-1})=G\setminus\{0\}.$$
A contradiction with Lemma \ref{empty} and $0\nmid S$.

Next we assume that $x_i-x_j\in \{-1,0,1\}$ for every $i,j\in [1,nm]$. We may assume that $x_1=\cdots=x_l=x_{l+1}-1=\cdots=x_{nm}-1$, $l\in[1,nm-1]$. Thus $\Sigma_{i=1}^{nm}x_i=nmx_1+(nm-l)\equiv 1\pmod n$, it deduces that $l=nm-tn-1$, $t\in [0,m-1]$. So $ST^{-1}=e_1^{n-2}(x_1e_1+e_2)^{nm-tn-1}((x_1+1)e_1+e_2)^{tn+1}$.
Replacing $x_1e_1+e_2$ with $e_2$, we have that $ST^{-1}=e_1^{n-2}e_2^{nm-tn-1}(e_1+e_2)^{tn+1}$. Then
$$\Sigma_{\geq 2}(ST^{-1})=G\setminus\{0, e_2\}.$$
By Lemma \ref{empty} and $0\nmid S$, we obtain that $T$ is of the form $e_2^{nm}$. If $t\geq 1$ and $n\geq 3$, then $e_1^{n-2}(e_1+e_2)^2e_2^{nm-2}$ is a zero-sum subsequence of $S$ with length $nm+n-2> nm$, which is a contradiction.
Therefore $t=0$ or $n=2$. If $t=0$, then
$$S=e_1^{n-2}e_2^{2nm-1}(e_1+e_2).$$
Replacing $e_1$ with $g_1$ and $e_2$ with $g_2$, $\{g_1,g_2\}$ is a generating set of $G$ where $\ord(g_1)=n$ and $\ord(g_2)=nm$. Thus $S$ is of the form (1).

If $n=2$, then
$$S=e_2^{4m-2t-1}(e_1+e_2)^{2t+1}.$$
Replacing $e_1+e_2$ with $g_1$ and $e_2$ with $g_2$, $\{g_1,g_2\}$ is a generating set of $G$ where $\ord(g_1)=\ord(g_2)=nm$. Thus $S$ is of the form (5).

{\bf Case 2.} $ST^{-1}(-\sigma(ST^{-1}))$ is of the form (\ref{1}) with $u=2$ and $v=1$. It follows that $$ST^{-1}(-\sigma(ST^{-1}))=e_2^{nm-1}\prod_{i=1}^{n}(x_ie_2+e_1),$$
where $x_1,\ldots, x_{n}\in [0, nm-1]$ and $x_1+\cdots+x_{n}\equiv 1 \pmod {nm}$.
If $m=1$, then it reduces to Case 1. Therefore we assume that $m\geq 2$.

{\bf Subcase 2.1.} $ST^{-1}=e_2^{nm-1}\prod_{i=1}^{n-1}(x_ie_2+e_1)$. If $x_i\neq x_j$ for some $i\neq j\in [1,n-1]$, then
$$\Sigma_{\geq 2}(ST^{-1})=G\setminus\{0,e_2\}.$$
By Lemma \ref{empty} and $0\nmid S$, we conclude that $T$ is of the form $e_2^{nm}$. Therefore
$$S=e_2^{2nm-1}\prod_{i=1}^{n-1}(x_ie_2+e_1).$$
Replacing $e_1$ with $g_1$ and $e_2$ with $g_2$, $\{g_1,g_2\}$ is a generating set of $G$ where $\ord(g_1)=n$ and $\ord(g_2)=nm$. Thus $S$ is of the form (1).

Next we assume that $x_1=\cdots=x_{n-1}$, i.e. $ST^{-1}=e_2^{nm-1}(x_1e_2+e_1)^{n-1}$. If $x_1\pmod m=0$, it is easy to see that $\ord(x_1e_2+e_1)=n$. By replacing $x_1e_2+e_1$ with $e_1$, we have $ST^{-1}=e_2^{nm-1}e_1^{n-1}$ and it reduces to Case 1. Next we suppose $x_1\pmod m\in [1,m-1]$. Then
$$\Sigma_{\geq 2}(ST^{-1})=G\setminus\{0,e_2, x_1e_2+e_1\}.$$

If $x_1\pmod m\in [2,m-1]$ and $(x_1e_2+e_1)\mid T$, then $(x_1e_2+e_1)^ne_2^{nm-(nx_1\pmod{nm})}$ is a zero-sum subsequence of $S$ with length $nm+n-(nx_1\pmod{nm})<nm$, which is a contradiction. By Lemma \ref{empty} and $0\nmid S$, we obtain that $T$ is of the form $e_2^{nm}$. Therefore
$$S=e_2^{2nm-1}(x_1e_2+e_1)^{n-1}.$$
Replacing $e_1$ with $g_1$ and $e_2$ with $g_2$, $\{g_1,g_2\}$ is a generating set of $G$ where $\ord(g_1)=n$ and $\ord(g_2)=nm$. Thus $S$ is of the form (1).

Next we suppose that $x_1\pmod m=1$. Replacing $x_1e_1+e_2$ with $e_1+e_2$, we have $ST^{-1}=e_2^{nm-1}(e_1+e_2)^{n-1}$. By Lemma \ref{empty} and $0\nmid S$, we obtain that $T$ is of the form $e_2^{nm-tn}(e_1+e_2)^{tn}$ for $t\in[0,m]$. Therefore
$$S=e_2^{2nm-tn-1}(e_1+e_2)^{n+tn-1}.$$
Replacing $e_1+e_2$ with $g_1$ and $e_2$ with $g_2$, $\{g_1,g_2\}$ forms a generating set of $G$ where $\ord(g_1)=\ord(g_2)=nm$. Thus $S$ is of the form (5).

{\bf Subcase 2.2.} $ST^{-1}=e_2^{nm-2}\prod_{i=1}^{n}(x_ie_2+e_1)$ with $\Sigma_{i=1}^nx_i\equiv 1\pmod {nm}$. It follows that $x_i\neq x_j$ for some $i\neq j\in [1,n]$. Without loss of generality, we may assume that $0\leq x_1\leq \cdots\leq x_n\leq nm-1$. If $x_i, x_j, x_k$ are pairwise distinct for some $i,j,k\in[1,n]$, or if $x_i-x_j\in[2,nm-2]$ for some $i,j\in[1,n]$, we infer that
$$\Sigma_{\geq 2}(ST^{-1})=G\setminus\{0\}.$$
A contradiction with Lemma \ref{empty} and $0\nmid S$.

Next we assume that $x_i-x_j\in \{-1, 0,1\}$ for every $i,j\in[1,n]$. We may assume that $x_1=\cdots=x_l=x_{l+1}-1=\cdots=x_n-1$, $l\in[1,n-1]$. Thus $\Sigma_{i=1}^nx_i=nx_1+n-l\equiv 1\pmod {nm}$, it deduces that $l=n-1$ and $m\mid x_1$. So $ST^{-1}=e_2^{nm-2}(x_1e_2+e_1)^{n-1}((x_1+1)e_2+e_1)$.  By replacing $x_1e_2+e_1$ with $e_1$, we have $ST^{-1}=e_2^{nm-2}e_1^{n-1}(e_2+e_1)$. Thus it reduces to Case 1 and we are done.

{\bf Case 3.} $ST^{-1}(-\sigma(ST^{-1}))$ is of the form (\ref{2}) with $ng_1\neq ng_2$. By (\ref{2}), we have $s=1$. So we can write $$ST^{-1}(-\sigma(ST^{-1}))=g_1^{n-1}\prod_{i=1}^{nm}(-y_ig_1+g_2).$$
Note that $y_1, \ldots,y_{nm}\in[0,n-1]$ and $y_1+\cdots+y_{nm}=n-1$.

Let $\varphi: G\longrightarrow G/\langle g_2 \rangle$ denote the canonical epimorphism. Since $\ord(g_2)=nm$ and $\{g_1,g_2\}$ is a generating set of $G$, we have $n=\ord(\varphi(g_1))\mid \ord(g_1)$. If $\ord(g_1)=n$, then it reduces to Case 1. So we may assume that $n<\ord(g_1)\leq nm$. Suppose $g_1=xe_1+t_0g_2$ for some integer $t_0\in [0,nm-1]$ and $x\in[0, n-1]$, where $\{e_1, g_2\}$ is a basis of $G$. Then $ng_1=t_0ng_2$. Since $\ord(g_2)=nm$, we infer that there exists $t'\in[0,m-1]$ such that
$$ng_1=t_0ng_2=t'ng_2.$$
Since $ng_1\neq ng_2$, we obtain that $t'\in[2,m-1]$, it deduces that $m\geq 3$.

{\bf Subcase 3.1.} $ST^{-1}=g_1^{n-2}\prod_{i=1}^{nm}(-y_ig_1+g_2)$.
Without loss of generality, we assume that $n-1\geq y_1\geq\cdots \geq y_j> y_{j+1}=\cdots=y_{nm}=0$. Since $y_1+\cdots+y_{nm}=n-1$, we have $j\in [1,n-1]$.

If $j\in[2,n-2]$, we obtain that
$$\Sigma_{\geq 2}(ST^{-1})=G\setminus\{0\}.$$
A contradiction with Lemma \ref{empty} and $0\nmid S$.

If $j=1$, i.e. $ST^{-1}=g_1^{n-2}g_2^{nm-1}(-(n-1)g_1+g_2)$, then we have
$$\Sigma_{\geq 2}(ST^{-1})=G\setminus\{0, g_2\}.$$
By Lemma \ref{empty} and $0\nmid S$, we obtain that $T$ is of the form $g_2^{nm}$. Therefore
$$S=g_1^{n-2}g_2^{2nm-1}(-(n-1)g_1+g_2).$$
Thus $S$ is of the form (2).

If $j=n-1$, i.e. $ST^{-1}=g_1^{n-2}g_2^{nm-n+1}(-g_1+g_2)^{n-1}$, then we have
$$\Sigma_{\geq 2}(ST^{-1})=G\setminus\{0, -g_1+g_2\}.$$
By Lemma \ref{empty} and $0\nmid S$, we conclude that $T$ is of the form $(-g_1+g_2)^{nm}$.
Furthermore, $g_2^{(t'-1)n}(-g_1+g_2)^{n}$ is a zero-sum subsequence of $S$ with length $t'n<nm$,
which is a contradiction.

{\bf Subcase 3.2.} $ST^{-1}=g_1^{n-1}\prod_{i=1}^{nm-1}(-y_i g_1+g_2)$. Without loss of generality, we assume that
$n-1\geq y_1\geq\cdots \geq y_j> y_{j+1}=\cdots=y_{nm-1}=0$. Since $y_1+\cdots+y_{nm}=n-1$, we have $j\in [0,n-1]$.

If $j\in[1,n-1]$, we may first assume that $\Sigma_{i=1}^jy_i=n-1$, so we have
$$\Sigma_{\geq 2}(ST^{-1})=G\setminus\{0\}.$$
A contradiction with Lemma \ref{empty} and $0\nmid S$.

Next we consider that $\Sigma_{i=1}^jy_i<n-1$, then we have
$$\Sigma_{\geq 2}(ST^{-1})=G\setminus\{0,g_1\}.$$
Again by Lemma \ref{empty} and $0\nmid S$, we conclude that $T$ is of the form $g_1^{nm}$. Furthermore, $g_1^{nm-n+\sum_{i=1}^jy_i}\prod_{i=1}^{t'n}(-y_i g_1+g_2)$ is a zero-sum subsequence of $S$ with length $nm+t'n-n+\sum_{i=1}^jy_i>nm$, which is a contradiction.

If $j=0$, i.e. $ST^{-1}=g_1^{n-1}g_2^{nm-1}$, it follows that
$$\Sigma_{\geq 2}(ST^{-1})=G\setminus\{0, g_1, g_2\}.$$
If $g_1\mid T$, then $g_1^ng_2^{nm-t'n}$ is a zero-sum subsequence of $S$ with length $nm-t'n+n<nm$, which is a contradiction. By Lemma \ref{empty} and $0\nmid S$, we obtain that $T$ is of the form $g_2^{nm}$.  Therefore
$$S=g_1^{n-1}g_2^{2nm-1}.$$
Thus $S$ is of the form (3).

{\bf Case 4.} $ST^{-1}(-\sigma(ST^{-1}))$ is of the form (\ref{2}) with $ng_1= ng_2$. It follows that $$ST^{-1}(-\sigma(ST^{-1}))=g_1^{sn-1}\prod_{i=1}^{nm+(1-s)n}(-y_i g_1+g_2),$$
where $y_1, \ldots, y_{nm+(1-s)n}\in [0, n-1]$ and $y_1+\cdots+y_{nm+(1-s)n}=n-1$.

Similar to the proof of Case 3, we have $n\mid \ord(g_1)$. Since $ng_1=ng_2$ and $\ord(g_2)=nm$, we have $\frac{\ord(g_1)}{n}=\ord(ng_1)=\ord(ng_2)=m$, and so it deduces that $\ord(g_1)=nm$. It is easy to see that $\ord(-g_1+g_2)=n$.

{\bf Subcase 4.1.} $ST^{-1}=g_1^{sn-2}\prod_{i=1}^{nm+(1-s)n}(-y_i g_1+g_2)$. Without loss of generality, we assume that $n-1\geq y_1\geq\cdots \geq y_j> y_{j+1}=\cdots=y_{nm+(1-s)n}=0$. Since $y_1+\cdots+y_{nm+(1-s)n}=n-1$, we have $j\in [1,n-1]$.

 If $j\in[2,n-2]$, we obtain that
$$\Sigma_{\geq 2}(ST^{-1})=G\setminus\{0\}.$$
A contradiction with Lemma \ref{empty} and $0\nmid S$.

Suppose $j=1$, i.e. $ST^{-1}=g_1^{sn-2}g_2^{nm+n(1-s)-1}(-(n-1)g_1+g_2)$. If $s\geq 2$, we have
$$\Sigma_{\geq 2}(ST^{-1})=G\setminus\{0\}.$$
A contradiction with Lemma \ref{empty} and $0\nmid S$.

If $s=1$, i.e. $ST^{-1}=g_1^{n-2}g_2^{nm-1}(-(n-1)g_1+g_2)$.
It follows that
$$\Sigma_{\geq 2}(ST^{-1})=G\setminus\{0, g_2\}.$$
By Lemma \ref{empty} and $0\nmid S$, we obtain that $T$ is of the form $g_2^{nm}$. Therefore
$$S=g_1^{n-2}g_2^{2nm-1}(-(n-1)g_1+g_2).$$
Thus $S$ is of the form (2).

If $j=n-1$, i.e. $ST^{-1}=g_1^{sn-2}g_2^{nm-sn+1}(-g_1+g_2)^{n-1}$, then by replacing $g_1$ with $e_2$ and $-g_1+g_2$ with $e_1$, we see that $\{e_1,e_2\}$ is a basis of $G$. It follows that $ST^{-1}=e_1^{n-1}e_2^{nm-sn+1}(e_1+e_2)^{sn-2}$, and this reduces to Case 1.

{\bf Subcase 4.2.} $ST^{-1}=g_1^{sn-1}\prod_{i=1}^{nm+(1-s)n-1}(-y_i g_1+g_2)$. Without loss of generality, we assume that
$n-1\geq y_1\geq\cdots \geq y_j> y_{j+1}=\cdots=y_{nm+(1-s)n-1}=0$. Since $y_1+\cdots+y_{nm+(1-s)n}=n-1$, we have $j\in [0,n-1]$.

If $j\in[1,n-2]$, we first assume that $\Sigma_{i=1}^jy_i=n-1$ and $s\leq m-1$, it then follows that
$$\Sigma_{\geq 2}(ST^{-1})=G\setminus\{0\}.$$
A contradiction with Lemma \ref{empty} and $0\nmid S$.

Next, considering the cases where $\Sigma_{i=1}^jy_i<n-1$ or $s=m$, we infer that
$$\Sigma_{\geq 2}(ST^{-1})=G\setminus\{0,g_1\}.$$
By Lemma \ref{empty} and $0\nmid S$, we conclude that $T$ is of the form $g_1^{nm}$. If $s<m$, then $g_1^{nm-n+\sum_{i=1}^jy_i}\prod_{i=1}^{n}(-y_i g_1+g_2)$ is a zero-sum subsequence of $S$ with length $nm+\sum_{i=1}^jy_i>nm$,
which is a contradiction. Therefore $s=m$ and
$$S=g_1^{2nm-1}\prod_{i=1}^{n-1}(-y_i g_1+g_2).$$
Thus $S$ is of the form (4).

If $j=0$, i.e. $ST^{-1}=g_1^{sn-1}g_2^{nm+n(1-s)-1}$, it follows that
$$\Sigma_{\geq 2}(ST^{-1})=G\setminus\{0, g_1, g_2\}.$$
By Lemma \ref{empty} and $0\nmid S$, we conclude that $T$ is of the form $g_1^{tn}g_2^{nm-tn}$ for $t\in[0,m]$. Therefore
$$S=g_1^{sn+tn-1}g_2^{2nm+n(1-s)-tn-1}.$$
Thus $S$ is of the form (5).

Suppose $j=n-1$, i.e. $ST^{-1}=g_1^{sn-1}(-g_1+g_2)^{n-1}g_2^{nm-sn}$. Replacing $g_1$ with $e_2$ and $-g_1+g_2$ with $e_1$, we see that  $\{e_1,e_2\}$ is a basis of $G$. It follows that $ST^{-1}=e_1^{n-1}e_2^{nm-sn}(e_1+e_2)^{sn-1}$, which reduces to Case 1.
\qed

\bigskip
\noindent {\bf Acknowledgements.}
This work was supported by the Sichuan Science and Technology Program (Grant No. 2024NSFSC2051).


\providecommand{\bysame}{\leavevmode\hbox to3em{\hrulefill}\thinspace}
\providecommand{\MR}{\relax\ifhmode\unskip\space\fi MR }
\providecommand{\MRhref}[2]{%
  \href{http://www.ams.org/mathscinet-getitem?mr=#1}{#2}
}
\providecommand{\href}[2]{#2}

\end{document}